\newtheorem{theorem}{Theorem}[section]
\newtheorem{corollary}[theorem]{Corollary}
\theoremstyle{definition}
\newtheorem{definition}[theorem]{Definition}
\newtheorem{example}[theorem]{Example}
\theoremstyle{remark}
\numberwithin{equation}{section}
\def\DJ{\leavevmode\setbox0=\hbox{D}\kern0pt\rlap
 {\kern.04em\raise.188\ht0\hbox{-}}D}
\begin{document}
\title[Existence of solutions to non-linear quadratic integral equations]{Existence of solutions to non-linear quadratic integral equations via measure of non-compactness}

\author[S. \ Karmakar]{Surajit \ Karmakar$^{1}$}
\address{$^1$Department of Mathematics,
\newline \indent National Institute of Technology  Durgapur,
\newline \indent India}
\email{surajit866@gmail.com}

\author[H. \ Garai]{Hiranmoy \ Garai$^{2}$}
\address{$^{2}$Department of Mathematics,
\newline \indent National Institute of Technology  Durgapur,
\newline \indent India}
\email{hiran.garai24@gmail.com}

\author[L.K. \ Dey]{Lakshmi Kanta \ Dey$^{3}$}
\address{$^{3}$Department of Mathematics,
\newline \indent National Institute of Technology  Durgapur,
\newline \indent India}
\email{lakshmikdey@yahoo.co.in}

\author[A. Chanda]{Ankush Chanda$^{4}$}
\address{$^{4}$Department of Mathematics,
\newline \indent Vellore Institute of Technology,  Vellore,
\newline \indent India}
\email{ankushchanda8@gmail.com}

\keywords{Measure of non-compactness, quadratic integral equations, Darbo fixed point theorem, Mizoguchi-Takahashi functions, Banach spaces. \\
\indent 2010 {\it Mathematics Subject Classification}.  $47$H$10$, $54$H$25$. \\
}

\begin{abstract}
The objective of this manuscript is to enquire for the solvability of a specific type of non-linear quadratic integral equations via the interesting notion of measure of non-compactness. Firstly, we inquire into couple of exciting fixed point theorems involving a measure of non-compactness in the setting of a Banach space. Subsequently, bringing into play a suitable measure of non-compactness and the acquired results, we discuss the existence of solution to the aforementioned kind of non-linear quadratic integral equations. 
\end{abstract}

\maketitle

\section{Introduction}
\baselineskip .55 cm
The study on integral equations has drawn a huge amount of attention from the enthusiasts as it plays a decisive role in spelling out
plenty of events and problems of real world. Utilizing several theories of functional analysis, topology and fixed point theory as the tools, the subject is growing fast with applications in applied mathematics, physics, engineering, economics, biological sciences and many other branches of science \cite{AOW,C14,OM,ZKKMRS}. Another crucial aspect of the study is the investigation of solvability conditions to integral equations as a whole, fixed point theory is one of the most vital and easiest of them. Mathematicians have explored a number of fixed point theorems and common fixed point theorems \cite{SB,AAM,AMR,GDC,KGDC} to guarantee the existence of a solution or more to a certain type of integral equations. In the existing literature, there are many a number of articles which deal with such scenarios via some suitable results from fixed point theory involving control functions while some others make use of findings in fixed point theory via measure of non-compactness. However, it is exciting to note that both of these approaches firstly enquire for existence of solutions and then look for solutions, and also are quite easy to handle. 

In his research article, Kuratowski \cite{K11} put forward the concept of a measure of non-compactness (in short, MNC) and this gave a new wing to the research in this direction. Afterwards, Darbo \cite{D4} employed this concept and obtained the fixed points of $\alpha$-set contraction defined on a closed, bounded and convex subset of a Banach space. This  result extends and generalizes the Schauder fixed point theorem remarkably, and is often brought into play as an essential tool to inspect the existence of a solution to a number of classes of non-linear equations. A few interesting works related to this notion can be revisited in \cite{AAM,AA,CL,CT,NAAH, ANCB} and the references therein.

On the other hand, the quadratic integral equations are frequently applied to the theory of radiative transfer, kinetic theory of gases, theory of neutron transport, biology and queuing theory, vehicular traffic theory and many more \cite{A2,B9,D2,HKZ,CZ}. Besides, this kind of integral equations has been a topic of extensive mathematical investigations by the researchers \cite{AOW,C14,ZKKMRS,V1,SB}.

Meanwhile, there comes an obvious question on whether some fixed point results involving both the control functions and measure of non-compactness can be employed to tackle such situations or not.
To meet this specific interest, in this article, we confirm a couple of fascinating fixed point results concerning a continuous operator defined on a non-empty, bounded, closed and convex subset of a Banach space. To add with, we come by some immediate corollaries from our conceived results. Alongside, we consider a certain class of aforementioned equations, non-linear quadratic integral equations (QIE) of Volterra type, as follows:
\begin{equation*}
x(t)=g(t,x(t))+\lambda \int_{0}^{t} \alpha_1(t,s)\zeta_1(s,x(s)) ds \int_{0}^{t} \alpha_2(t,s)\zeta_2(s,x(s)) ds,
\end{equation*}
for $t>0$, where $h,\mu_1,\mu_2,\zeta_1,\zeta_2$ are real-valued continuous functions defined on $\mathbb{R}^+ \times  \mathbb{R}$ and $\lambda$ is a positive constant. It may be noted that such kind of quadratic integral equations has a far-reaching applicability in more diversified fields. Further, the study of the solvability of such equations of Volterra type, employing the notion of measure of non-compactness is yet to appear in the literature. Therefore, we take up our obtained results to discuss some sufficient conditions for the existence of solution to a certain class of non-linear quadratic integral equations. Finally, our findings generalize, extend and compliment a number of results existing in the literature.
\section{Preliminaries}
This section deals with some essential notions, fundamental results and terminologies which are playing the lead roles in our findings. To begin with, we note down the definition of a measure of non-compactness.
\begin{definition}  \cite{BG}
A mapping $\sigma:\mathfrak{M}_E\to \mathbb{R}^+$ is said to be an MNC on $E$ if the following are satisfied:
\begin{enumerate}
\item[(i)]
the class $ker \sigma=\{A \in \mathfrak{M}_E:\sigma(A)=0\}$ is non-empty and $ker \sigma \subseteq \mathfrak{M}_E$;
\item[(ii)]
$A\subseteq B \implies \sigma(A)\leq \sigma(B)$;
\item[(iii)]
$\sigma(\overline{A})=\sigma(A)$;
\item[(iv)]
$\sigma(conv(A))=\sigma(A)$;
\item[(v)]
$\sigma(\lambda A+(1-\lambda)B)\leq \lambda \sigma(A)+(1-\lambda)\sigma(B)$;
\item[(vi)]
if $(A_n)$ is a sequence of closed sets from $\mathfrak{M}_E$ such that $A_{n+1}\subseteq A_n$ for $n \in \mathbb{N}$ and if $\displaystyle\lim_{n\to \infty}{\sigma(A_n)=0}$, then the set $A_\infty=\cap_{n=1}^{\infty}A_n$ is non-empty.
\end{enumerate}
\end{definition}
Here we note that the collection $ker \sigma$ is called as the kernel of the measure of non-compactness $\sigma$ and also recollect an essential property that $A_\infty \in ker \sigma.$ Additionally, using the result $\sigma(A_\infty)\leq \sigma(A_n)$ for all $n \in \mathbb{N}$, we can conclude that $\sigma(A_\infty)=0.$ Now, we put down the much acclaimed Schauder fixed point theorem.
\begin{theorem} \cite{BG}
Suppose that $A$ is a non-empty, bounded, closed and convex subset of any Banach space $M$. Then each continuous, compact mapping $\tau:A \to A$ owns at least one fixed point in $A$.
\end{theorem}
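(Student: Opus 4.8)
The plan is to reduce the infinite-dimensional problem to a finite-dimensional one, where the classical Brouwer fixed point theorem is available, and then pass to a limit. The whole strategy hinges on the word \emph{compact} in the hypothesis: since $\overline{\tau(A)}$ is compact, it can be approximated to within any prescribed accuracy by finite-dimensional convex sets on which $\tau$ behaves like a continuous self-map of a compact convex polytope, and on such a set Brouwer's theorem applies.

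First I would fix $\varepsilon>0$ and, by relative compactness of $\tau(A)$, choose a finite $\varepsilon$-net $y_1,\dots,y_n\in\tau(A)$ so that the balls $B(y_i,\varepsilon)$ cover $\overline{\tau(A)}$. I would then build the Schauder projection $P_\varepsilon$ by means of a partition of unity: setting $\mu_i(y)=\max\{0,\varepsilon-\|y-y_i\|\}$ and
\[
P_\varepsilon(y)=\frac{\sum_{i=1}^{n}\mu_i(y)\,y_i}{\sum_{i=1}^{n}\mu_i(y)},
\]
one checks that the denominator is strictly positive on $\overline{\tau(A)}$, that $P_\varepsilon$ is continuous, that $P_\varepsilon$ takes values in the convex hull $C_\varepsilon=\mathrm{conv}\{y_1,\dots,y_n\}$, and that $\|P_\varepsilon(y)-y\|<\varepsilon$ for every $y\in\overline{\tau(A)}$, since $\mu_i(y)\neq 0$ forces $\|y-y_i\|<\varepsilon$.

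Next I would consider the composition $\tau_\varepsilon=P_\varepsilon\circ\tau$. Because the $y_i$ lie in $\tau(A)\subseteq A$ and $A$ is convex, $C_\varepsilon\subseteq A$; hence $\tau_\varepsilon$ maps $A$ into $C_\varepsilon$ and in particular restricts to a continuous self-map of the compact convex set $C_\varepsilon$, which lives in the finite-dimensional subspace spanned by $y_1,\dots,y_n$. The classical Brouwer fixed point theorem (in its standard form for compact convex subsets of $\mathbb{R}^n$) then yields a fixed point $x_\varepsilon\in C_\varepsilon$ of $\tau_\varepsilon$, and the approximation estimate gives
\[
\|\tau(x_\varepsilon)-x_\varepsilon\|=\|\tau(x_\varepsilon)-P_\varepsilon(\tau(x_\varepsilon))\|<\varepsilon .
\]

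Finally I would let $\varepsilon=\tfrac{1}{k}\to 0$ to obtain a sequence $x_k\in A$ with $\|\tau(x_k)-x_k\|\to 0$. Since $\tau(x_k)\in\tau(A)$ and $\overline{\tau(A)}$ is compact, a subsequence $\tau(x_{k_j})$ converges to some $z\in\overline{\tau(A)}\subseteq A$, using that $A$ is closed. The estimate forces $x_{k_j}\to z$ as well, and continuity of $\tau$ gives $\tau(x_{k_j})\to\tau(z)$; comparing the two limits yields $\tau(z)=z$, so $z$ is the desired fixed point. The main obstacle is genuinely the finite-dimensional reduction: constructing $P_\varepsilon$ so that it is simultaneously of finite rank and uniformly $\varepsilon$-close to the identity on $\overline{\tau(A)}$ is exactly what lets Brouwer's theorem do the real work, and verifying these two properties together with the inclusion $C_\varepsilon\subseteq A$ is the crux of the argument.
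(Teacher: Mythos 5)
Your proof is correct: it is the classical Schauder-projection argument (finite $\varepsilon$-net of the relatively compact image, the partition-of-unity projection $P_\varepsilon$ into the convex hull of the net, Brouwer's theorem on that finite-dimensional compact convex set, and a limiting argument using $\|\tau(x_k)-x_k\|\to 0$ together with compactness of $\overline{\tau(A)}$ and closedness of $A$). The paper itself offers no proof to compare against --- it quotes this result (Schauder's fixed point theorem) directly from the cited monograph of Bana\'s and Goebel --- and your argument is precisely the standard proof found there and in the textbook literature, with all the key verifications (positivity of the denominator, the estimate $\|P_\varepsilon(y)-y\|<\varepsilon$, the inclusion $\mathrm{conv}\{y_1,\dots,y_n\}\subseteq A$) carried out correctly.
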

Hereafter, we make a note of one of the most exciting and remarkable generalizations of the previous theorem which is the Darbo fixed point theorem.
\begin{theorem} \cite{D4}
Let $A$ be a non-empty, bounded, closed and convex subset of a Banach space $M$ and suppose that $\tau:A \to A$ is  continuous. Further suppose that there exists a constant $k \in [0,1)$ such that
\[\sigma(\tau X)\leq k \sigma(X)\]
for a non-empty subset $X$ of $A$, where $\sigma$ is an arbitrary MNC defined in $M$. Then $\tau$ owns a fixed point in $A$.
\end{theorem}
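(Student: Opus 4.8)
The plan is to deduce the statement from the Schauder fixed point theorem (Theorem 2.2) by manufacturing a nested sequence of sets whose intersection is a non-empty, compact, convex and $\tau$-invariant set. First I would put $A_0=A$ and define recursively
\[
A_{n+1}=\overline{conv}\,(\tau(A_n)),\qquad n\geq 0,
\]
where $\overline{conv}(\cdot)$ denotes the closure of the convex hull. Because $\tau(A)\subseteq A$ and $A$ is closed and convex, we have $A_1=\overline{conv}\,(\tau(A))\subseteq A=A_0$; then an induction using the monotonicity of $\tau$ and of the closed-convex-hull operation yields $A_{n+1}\subseteq A_n$ for every $n$. By construction each $A_n$ is non-empty, bounded, closed and convex, and moreover $\tau(A_n)\subseteq A_{n+1}\subseteq A_n$.

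Next I would monitor the measure of non-compactness along this sequence. Applying the invariance properties (iii) and (iv) of $\sigma$ under closure and convex hull, together with the contraction hypothesis (taking the admissible subset $X=A_n$), I obtain
\[
\sigma(A_{n+1})=\sigma\bigl(\overline{conv}\,(\tau(A_n))\bigr)=\sigma(\tau(A_n))\leq k\,\sigma(A_n).
\]
Iterating gives $\sigma(A_n)\leq k^{n}\sigma(A)$, and since $k\in[0,1)$ this forces $\sigma(A_n)\to 0$ as $n\to\infty$. At this stage I invoke axiom (vi): as $(A_n)$ is a decreasing sequence of non-empty closed sets with $\sigma(A_n)\to 0$, the intersection $A_\infty=\bigcap_{n=1}^{\infty}A_n$ is non-empty, and by the remark recorded after the definition it belongs to $\ker\sigma$, so that $\sigma(A_\infty)=0$. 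Being an intersection of closed convex sets, $A_\infty$ is itself closed and convex; and since it lies in the kernel (hence is relatively compact) while also being closed, $A_\infty$ is compact.

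It remains only to verify that $\tau$ leaves $A_\infty$ invariant. For every $n$ one has $\tau(A_\infty)\subseteq\tau(A_n)\subseteq A_{n+1}$, whence $\tau(A_\infty)\subseteq\bigcap_{n}A_{n+1}=A_\infty$. Therefore $\tau$ restricts to a continuous self-map of the non-empty, compact and convex set $A_\infty$, and the Schauder fixed point theorem (Theorem 2.2) produces a fixed point of $\tau$ in $A_\infty\subseteq A$, which settles the claim. The inductive inclusions and the geometric decay $\sigma(A_n)\leq k^{n}\sigma(A)$ are essentially routine; the step carrying the real weight is the correct use of axiom (vi), since it is precisely the non-emptiness together with the kernel membership of the limit set $A_\infty$ that upgrades it to a compact convex set and thereby licenses the final appeal to Schauder's theorem.
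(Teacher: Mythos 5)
Your proof is correct and is the classical Darbo argument; the paper states this result only as a cited preliminary without proof, but the blueprint you use---iterating $A_{n+1}=\overline{conv}\,(\tau(A_n))$, driving $\sigma(A_n)\to 0$ (here via the geometric bound $\sigma(A_n)\leq k^n\sigma(A)$), invoking axiom (vi) together with the kernel remark to obtain a non-empty compact convex invariant set $A_\infty$, and finishing with the Schauder theorem---is exactly the scheme the paper itself follows in proving its generalization, Theorem \ref{th1}. So your approach is essentially the same as the paper's, with the control-function estimates replaced by the simpler contraction-constant estimate that this special case permits.
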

The subsequent collection of functions was coined by Geraghty \cite{G} and the notion of such functions is pre-requisite for our conceived results.
\begin{definition} \cite{G}
Let $\Delta$ be the family of all functions $\alpha : \mathbb{R}^{+}\to [0,1)$ such that for any sequence $(t_n)$ whenever $\alpha(t_n)\to 1$ that implies $t_n \to 0$ as $n \to \infty$.
\end{definition}
Further, the concept of the following class of functions was presented by Altun and Turkoglu \cite{AT} and is employed to establish many results in this direction.
\begin{definition}\label{AT} \cite{AT}
Let $\mathfrak{F}([0,\infty))$ be the class of functions such that $\xi:[0,\infty)\to [0,\infty]$ and also let $\Theta$ be the the class of all operators 
\begin{eqnarray*}
\mathcal{O}(\bullet,.) : \mathfrak{F}([0,\infty))& \to &  \mathfrak{F}([0,\infty)),\\
 \xi & \to & \mathcal{O}(\xi,.)
\end{eqnarray*}
satisfying the following conditions:
\begin{enumerate}
\item[(i)] 
$\mathcal{O}(\xi;t)>0$ for all $t>0$ and $\mathcal{O}(\xi,0)=0$;
\item[(ii)]
$\mathcal{O}(\xi;t)\leq \mathcal{O}(\xi;s)$ for $t\leq s$;
\item[(iii)]
$\displaystyle \lim_{n\to\infty} \mathcal{O}(\xi;t_n)=\mathcal{O}(\xi;\displaystyle \lim_{n\to\infty} t_n)$;
\item[(iv)]
$\mathcal{O}(\xi;\max \{t,s\})=\max\{\mathcal{O}(\xi;t),\mathcal{O}(\xi;s)\}$ for some $\xi\in \mathfrak{F}([0,\infty))$.
\end{enumerate}
\end{definition}
\begin{example} \cite{AT}
Let $\xi:[0,\infty)\to [0,\infty)$ be a non-decreasing continuous function with $\xi(0)=0$ and $\xi(t)>0$ for $t>0$. Then 
\[\mathcal{O}(\xi;t)=\frac{\xi(t)}{1+\ln(1+\xi(t))}\]
satisfies all the aforementioned conditions.
\end{example}
The succeeding family of control functions was initially formulated by Nashine and Arab \cite{NA2} in their research article.
\begin{definition} \cite{NA2}
Let $\Psi$ denote the collection of functions $\eta: \mathbb{R}^{+} \to \mathbb{R}^{+}$ which satisfy the followings:
\begin{enumerate}
\item[(i)]
$\eta$ is non-decreasing;
\item[(ii)]
$\eta$ is continuous;
\item[(iii)]
$\eta^{-1}(\{0\})=\{0\}$.
\end{enumerate}
\end{definition}
The authors made use of the previously discussed functions and an arbitrary MNC to secure a couple of impressive fixed point results in \cite{NA2}. However, the ensuing class of functions is playing a vital cog in our manuscript. Arab et al. \cite{AMR} utilized this notion and enriched the literature with a few interesting fixed point results.
\begin{definition} \cite{AMR}
Let $\mathbb{F}$ be the class of functions $\mathcal{F}: [0,\infty)^2\to [0,\infty)$ which satisfy the following:
\begin{enumerate}
\item[(i)]
$\max\{x,y\}\leq \mathcal{F}(x,y)$ for all $x,y\geq 0$;
\item[(ii)]
$\mathcal{F}$ is continuous.
\end{enumerate}
\end{definition}
\begin{example}
Here we put down some of the examples which satisfy the previous definition.
\begin{enumerate}
\item[(i)]
$\mathcal{F}_1(x,y)=\max\{x,y\}$;
\item[(ii)]
$\mathcal{F}_2(x,y)=x+y$.
\end{enumerate}
\end{example}

\section{Main Results}
This section deals with some novel and exciting fixed point results involving the previously discussed notions. Here we present the very first one which is as follows.
\begin{theorem} \label{th1}
Let $A$ be a non-empty, bounded, closed and convex subset of a Banach space $E$ and $T:A \to A$ be a continuous operator satisfying
\begin{align}\label{1a}
\eta(\mathcal{O}(\xi;\mathcal{F}(\sigma(TY),\varphi(\sigma(TY)))))\leq \alpha(\mathcal{O}(\xi;\eta(\sigma(A))))\beta (\mathcal{O}(\xi;\mathcal{F}(\sigma(Y),\varphi(\sigma(Y)))))
\end{align}
for all $Y \subseteq A$, where $\alpha \in \Delta$, $\mathcal{F}\in \mathbb{F}$, $\eta \in \Psi$, $\beta:\mathbb{R}^{+}\to\mathbb{R}^{+}$ is a continuous function such that $\eta(t)>\beta(t)$ for all $t>0$ and $\varphi :\mathbb{R}^{+}\to\mathbb{R}^{+}$ is also a continuous function. Then $T$ has at least one fixed point in $A$.
\end{theorem}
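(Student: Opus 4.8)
The plan is to run the classical Darbo-type iteration: construct a nested sequence of non-empty, closed, convex sets that $T$ maps into themselves, show that their measure of non-compactness tends to zero, and then finish by applying the Schauder fixed point theorem on the resulting compact limit set. The role of the functional hypothesis \eqref{1a} is precisely to drive the measures of non-compactness of the iterates to zero.

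First I would set $A_0 = A$ and recursively define $A_{n+1} = \overline{\mathrm{conv}}(T A_n)$ for $n \geq 0$. Using that $T$ maps $A$ into $A$ and that $A$ is closed and convex, a routine induction shows that each $A_n$ is a non-empty, bounded, closed and convex subset of $A$, that $A_{n+1} \subseteq A_n$, and that $T(A_n) \subseteq A_n$. Axioms (iii) and (iv) of the MNC give $\sigma(A_{n+1}) = \sigma(\overline{\mathrm{conv}}(T A_n)) = \sigma(T A_n)$, so applying \eqref{1a} with $Y = A_n$ yields
\[
\eta\big(\mathcal{O}(\xi; \mathcal{F}(\sigma(A_{n+1}), \varphi(\sigma(A_{n+1}))))\big) \leq c\, \beta\big(\mathcal{O}(\xi; \mathcal{F}(\sigma(A_n), \varphi(\sigma(A_n))))\big),
\]
where $c := \alpha(\mathcal{O}(\xi; \eta(\sigma(A)))) \in [0,1)$ is a \emph{fixed} constant, since $\alpha \in \Delta$ takes values in $[0,1)$ and its argument involves only $\sigma(A)$, independent of $n$.

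The heart of the proof is to deduce $\sigma(A_n) \to 0$. Writing $v_n := \mathcal{O}(\xi; \mathcal{F}(\sigma(A_n), \varphi(\sigma(A_n))))$, the displayed inequality reads $\eta(v_{n+1}) \leq c\,\beta(v_n)$. If some $v_{n_0}=0$ the sequence stabilizes trivially, so assume $v_n>0$ for all $n$; then $\beta(t)<\eta(t)$ for $t>0$ together with $c<1$ forces $\eta(v_{n+1}) \leq c\,\beta(v_n) < \eta(v_n)$, and since $\eta$ is non-decreasing this gives $v_{n+1}<v_n$. Hence $(v_n)$ decreases to some $L \geq 0$, and passing to the limit using the continuity of $\eta$ and $\beta$ gives $\eta(L) \leq c\,\beta(L)$; were $L>0$ this would read $\eta(L) \leq c\,\beta(L) < \beta(L) < \eta(L)$, a contradiction (note $\eta(L)>0$ since $\eta^{-1}(\{0\})=\{0\}$). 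Thus $v_n \to 0$. Because $\mathcal{O}(\xi;\cdot)$ is non-decreasing with $\mathcal{O}(\xi;t)>0$ for $t>0$, it follows that $\mathcal{F}(\sigma(A_n),\varphi(\sigma(A_n))) \to 0$, and the defining property $\max\{x,y\}\leq \mathcal{F}(x,y)$ of $\mathbb{F}$ gives $\sigma(A_n)\leq \mathcal{F}(\sigma(A_n),\varphi(\sigma(A_n)))$, whence $\sigma(A_n)\to 0$.

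Finally, by axiom (vi) of the measure of non-compactness the set $A_\infty=\bigcap_{n=1}^\infty A_n$ is non-empty; it is closed and convex as an intersection of closed convex sets, and $\sigma(A_\infty)\leq \sigma(A_n)\to 0$ shows $A_\infty \in \ker\sigma$, so $A_\infty$ is compact. Moreover $T(A_\infty)\subseteq A_\infty$, since $A_\infty\subseteq A_n$ yields $T(A_\infty)\subseteq \overline{\mathrm{conv}}(TA_n)=A_{n+1}$ for every $n$. Applying the Schauder fixed point theorem to the continuous self-map $T$ of the non-empty, compact, convex set $A_\infty$ produces a fixed point of $T$ in $A_\infty\subseteq A$. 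I expect the main obstacle to be the monotone-convergence step, where one must untangle $\eta$, $\beta$, $\mathcal{O}(\xi;\cdot)$ and $\mathcal{F}$ simultaneously; the delicate point is that $\eta$ is merely non-decreasing, so the strict decrease of $v_n$ must be read off from the strict inequality $\beta(t)<\eta(t)$ rather than from injectivity of $\eta$. It is worth remarking that, because the argument of $\alpha$ is the constant $\sigma(A)$, the Geraghty property of $\alpha$ is never actually invoked—only $\alpha<1$ is used.
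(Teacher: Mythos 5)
Your proof is correct for the theorem as stated, and it follows the paper's overall skeleton: iterate $A_{n+1}=\overline{\mathrm{conv}}(TA_n)$, prove $\sigma(A_n)\to 0$, obtain the non-empty, closed, convex, compact invariant set $A_\infty$ from axiom (vi), and finish with Schauder. Where you genuinely diverge is the convergence step, and your closing remark hits the crux of the matter. The paper's own proof applies \eqref{1a} with $Y=A_n$ but writes the Geraghty factor as $\alpha(\mathcal{O}(\xi;\eta(\sigma(A_n))))$, i.e., it silently reads the hypothesis as having $\sigma(Y)$ rather than $\sigma(A)$ inside $\alpha$; it then splits into cases according to whether $\delta=\lim_n\mathcal{F}(\sigma(A_n),\varphi(\sigma(A_n)))$ vanishes, and when $\delta>0$ it deduces $\lim_n\alpha(\mathcal{O}(\xi;\eta(\sigma(A_n))))=1$ and invokes the defining property of $\Delta$ to force $\sigma(A_n)\to 0$. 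You instead take the statement literally, so the factor is a fixed constant $c\in[0,1)$, and the monotone-limit contradiction $\eta(L)\le c\beta(L)\le\beta(L)<\eta(L)$ for $L>0$ closes the proof with no appeal to the Geraghty property at all. Your route is thus both faithful to the written statement and simpler, and it exposes a statement/proof mismatch in the paper. It is worth adding that even under the presumably intended hypothesis (with $\sigma(Y)$ inside $\alpha$), your argument survives essentially unchanged: since $\alpha<1$ pointwise and $\beta\ge 0$, one still has $\eta(v_{n+1})\le\beta(v_n)$, hence $v_n$ decreases to some $L$ with $\eta(L)\le\beta(L)$, which is impossible for $L>0$; so the Geraghty machinery is dispensable in either reading. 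Two cosmetic points: in the case $v_{n_0}=0$, rather than saying the sequence ``stabilizes'' (which tacitly needs $\beta(0)=0$), it is cleaner to note that $v_{n_0}=0$ forces $\sigma(A_{n_0})=0$ and then use monotonicity of $\sigma$ on the nested sets to get $\sigma(A_n)\to 0$ at once; and your strict inequality $c\beta(L)<\beta(L)$ presupposes $\beta(L)>0$, though the non-strict version $c\beta(L)\le\beta(L)$ already yields the contradiction.
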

\begin{proof}
Let us consider the sequence $(A_n)$ of subsets of $A$, where $A_0=A$ and $A_{n+1}=conv(TA_n)$ for $n\geq 0.$ Then we have,
 $TA_0=TA \subseteq A=A_0$, 
$A_1=conv (TA_0) \subseteq A \subset A_0$,
$A_2=conv(TA_1)\subseteq A_1\subset A_0$.
Thus continuing in this way we get
$$A_{n+1}\subseteq A_n\subseteq \cdots \subseteq A_0.$$
If $\sigma(A_{n_0})=0$ for some $n_0 \in \mathbb{N}$, then $A_{n_0} \in \mathfrak{N}_A$. Hence $A_{n_0}$ is a relatively compact subset of $A$. Since $T(A_{n_0})\subseteq conv(TA_{n_0})\subseteq A_{n_0}$, then by Schauder fixed point theorem $T$ has a fixed point. Therefore we can consider $\sigma(A_n)>0$ for $n\geq 0$.

Now from \eqref{1a}, we have,
\begin{align*}
\eta(\mathcal{O}(\xi;\mathcal{F}(\sigma(A_{n+1}),\varphi(\sigma(A_{n+1})))))=&\eta (\mathcal{O}(\xi;\mathcal{F}(\sigma(conv (TA_{n})),\varphi(\sigma(conv(TA_{n}))))))\nonumber\\
=&\eta (\mathcal{O}(\xi;\mathcal{F}(\sigma(TA_{n}),\varphi(\sigma(TA_{n})))))\nonumber\\
\leq & \alpha(\mathcal{O}(\xi;\eta(\sigma(A_n))))\beta(\mathcal{O}(\xi;\mathcal{F}(\sigma(A_{n}),\varphi(\sigma(A_{n})))))\nonumber\\
<&\beta(\mathcal{O}(\xi;\mathcal{F}(\sigma(A_{n}),\varphi(\sigma(A_{n})))))\nonumber\\
<&\eta(\mathcal{O}(\xi;\mathcal{F}(\sigma(A_{n}),\varphi(\sigma(A_{n}))))).
\end{align*}
As $\sigma(A_n)>0$ and $\eta(t)>\beta(t)$ when $t>0$ and also by the properties of $\eta$ and $\mathcal{O}(\bullet,.)$, it follows that the sequence $(\mathcal{F}(\sigma(A_n), \varphi(\sigma(A_n))))$ is non-increasing sequence of positive real numbers. Hence there exists a real number $\delta>0$ such that
\begin{equation}\label{e1}
\displaystyle \lim_{n\to \infty}\mathcal{F}(\sigma(A_{n+1}),\varphi(\sigma(A_{n+1})))=\displaystyle \lim_{n\to \infty}\mathcal{F}(\sigma(A_{n}),\varphi(\sigma(A_{n})))=\delta.
\end{equation}
\textbf{Case-I:}
Suppose $\delta=0$. Then, $$\displaystyle \lim_{n\to \infty}\mathcal{F}(\sigma(A_{n}),\varphi(\sigma(A_{n})))=0.$$
Since, $\mathcal{F}$ is continuous and $\mathcal{F}\in \mathbb{F}$, we have
\begin{align*}
\displaystyle \lim_{n\to \infty} \sigma(A_n)+\displaystyle \lim_{n\to \infty} \varphi (\sigma(A_n))\leq \displaystyle \lim_{n\to \infty} \mathcal{F}(\sigma(A_{n}),\varphi(\sigma(A_{n})))=& 0,\\
\implies \displaystyle \lim_{n\to \infty} \sigma(A_n)=&0.
\end{align*}
\textbf{Case-II:} Suppose $\delta>0$. Then we have,
\begin{align}\label{e2}
\eta(\mathcal{O}(\xi;\mathcal{F}(\sigma(A_{n+1}),\varphi(\sigma(A_{n+1})))))\leq \alpha(\mathcal{O}(xi;\eta(\sigma(A_n))))\beta(\mathcal{O}(\xi;\mathcal{F}(\sigma(A_{n}),\varphi(\sigma(A_{n}))))).
\end{align}
Letting $n\to \infty$ in \eqref{e2} and using \eqref{e1} we get,
\begin{equation*}
\eta (\mathcal{O}(\xi;\delta))\leq \displaystyle \lim_{n\to \infty} \alpha(\mathcal{O}(\xi;\eta(\sigma(A_n)))) \beta(\mathcal{O}(\xi;\delta)).
\end{equation*}
Since for all $t>0$, we have $\alpha<1$ and $\eta(t)>\beta(t)$, therefore,
\begin{align*}
\eta(\delta)\leq & \eta(\delta) \displaystyle \lim_{n\to \infty} \alpha(\mathcal{O}(\xi;\eta(\sigma(A_n))))\\
\Rightarrow 0\leq & \eta(\delta)\left\{\displaystyle \lim_{n\to \infty}\alpha(\mathcal{O}(\xi;\eta(\sigma(A_n))))-1\right\}.
\end{align*}
Therefore,
$$\displaystyle \lim_{n\to \infty}\alpha(\mathcal{O}(\xi;\eta(\sigma(A_n))))=1.$$
This implies that, 
\begin{align*}
\displaystyle \lim_{n\to \infty}\mathcal{O}(\xi;\eta(\sigma(A_n)))=&0\\
\Rightarrow \mathcal{O}(\xi;\eta(\displaystyle \lim_{n\to \infty}\sigma(A_n)))=& 0\\
\Rightarrow \eta(\displaystyle \lim_{n\to \infty}\sigma(A_n))=& 0\\
\Rightarrow \displaystyle \lim_{n\to \infty}\sigma(A_n)=& 0.
\end{align*}
Thus in both the cases we have,
\[ \displaystyle \lim_{n\to \infty}\sigma (A_n)= 0.\]
Since $(A_{n})$ is a decreasing sequence of subsets, i.e., $A_{n+1} \subseteq A_n $ for all $n \in \mathbb{N}$, we can claim that $A_{ \infty}= \displaystyle \cap_{n=1}^{\infty} A_n$ is a non-empty, closed and convex subset of $A$. Also we have $A_{\infty}$ is an element of $ker \sigma$. Therefore $A_{\infty}$ is compact and invariant under the mapping $T$. It follows from the Schauder fixed point theorem that $T$ has a fixed point in $A$.
\end{proof}
The subsequent corollary can be readily acquired from the above theorem.
\begin{corollary} \label{cor1}
Let $A$ be a non-empty, bounded, closed and convex subset of a Banach space $E$ and $T:A \to A$ be a continuous operator satisfying
$$\eta(\mathcal{O}(\xi;\sigma(TX)+\varphi(\sigma(TX))))\leq \alpha(\mathcal{O}(\xi;\eta(\sigma(X))))\beta (\mathcal{O}(\xi;\sigma(X)+\varphi(\sigma(X))))$$
for all $Y \subseteq A$, where $\alpha \in \Delta$, $\eta \in \Psi$, $\beta:\mathbb{R}^{+}\to\mathbb{R}^{+}$ is a continuous function with $\eta(t)>\beta(t)$ for all $t>0$ and $\varphi :\mathbb{R}^{+}\to\mathbb{R}^{+}$ is also a continuous function. Then $T$ has at least one fixed point in $A$.
\end{corollary}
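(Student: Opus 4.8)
The plan is to recognize Corollary \ref{cor1} as the specialization of Theorem \ref{th1} obtained by selecting the particular function $\mathcal{F}(x,y)=x+y$ from the admissible class $\mathbb{F}$. Comparing the contractive inequality of the corollary with condition \eqref{1a}, the two coincide verbatim once the generic expression $\mathcal{F}(\sigma(\cdot),\varphi(\sigma(\cdot)))$ is replaced by $\sigma(\cdot)+\varphi(\sigma(\cdot))$. All the remaining data---the Geraghty function $\alpha\in\Delta$, the control function $\eta\in\Psi$, the continuous $\beta$ satisfying $\eta(t)>\beta(t)$ for $t>0$, and the continuous $\varphi$---are transported unchanged from the hypotheses of the theorem to those of the corollary.

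Thus the only point requiring verification is that $\mathcal{F}_2(x,y):=x+y$ genuinely belongs to $\mathbb{F}$. First I would check condition (i) in the definition of $\mathbb{F}$: since $x,y\geq 0$, we have $\max\{x,y\}\leq x+y$, the surplus being exactly the smaller of the two nonnegative arguments. Second, condition (ii) is immediate, as the sum is continuous on $[0,\infty)^2$. Hence $\mathcal{F}_2\in\mathbb{F}$, precisely as already recorded in the Example preceding this section.

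With $\mathcal{F}=\mathcal{F}_2$ in hand, the standing assumption of the corollary is literally hypothesis \eqref{1a} of Theorem \ref{th1}, and every auxiliary requirement of that theorem is met. Invoking Theorem \ref{th1} directly therefore yields at least one fixed point of $T$ in $A$, which is the desired conclusion. There is essentially no analytical obstacle to overcome: the statement is a corollary in the strict sense, its entire content being absorbed by Theorem \ref{th1}, so the sole (and trivial) task is the membership check $\mathcal{F}_2\in\mathbb{F}$. Should a self-contained argument be preferred, one could instead replay the proof of Theorem \ref{th1} line by line with $\mathcal{F}(a,b)$ replaced by $a+b$ throughout---constructing the nested sequence $A_0=A$, $A_{n+1}=conv(TA_n)$, deducing that $\bigl(\sigma(A_n)+\varphi(\sigma(A_n))\bigr)$ is non-increasing, and splitting into the cases of zero and positive limit exactly as before---but this would merely re-derive the general case and add nothing new.
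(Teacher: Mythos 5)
Your proposal is correct and matches the paper's own proof exactly: the paper likewise sets $\mathcal{F}(x,y)=x+y$ (one of the examples listed for the class $\mathbb{F}$) and invokes Theorem \ref{th1} directly. Your additional verification that $\max\{x,y\}\leq x+y$ and that the sum is continuous is a harmless elaboration of what the paper leaves implicit.
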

\begin{proof}
Let us consider the map $\mathcal{F} \in \mathbb{F}$ defined by $\mathcal{F}(x,y)=x+y$ for all $x,y \in [0,\infty)$. The proof of this corollary follows from Theorem \ref{th1}.
\end{proof}
If we take $\varphi \equiv 0$ in Corollary \ref{cor1}, then we have the following corollary.
\begin{corollary}
Let $A$ be a non-empty, bounded, closed and convex subset of a Banach space $E$ and $T:A \to A$ be a continuous operator satisfying
$$\eta(\mathcal{O}(\xi;\sigma(TY)))\leq \alpha(\mathcal{O}(\xi;\eta(\sigma(Y))))\beta (\mathcal{O}(\xi;\sigma(Y)))$$
for all $Y \subseteq A$, where $\alpha \in \Delta$, $\mathcal{F}\in \mathbb{F}$, $\eta \in Eta$, $\beta:\mathbb{R}^{+}\to\mathbb{R}^{+}$ is a continuous function with $\eta(t)>\beta(t)$ for all $t>0$. Then $T$ has at least one fixed point in $A$.
\end{corollary}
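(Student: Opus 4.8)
The plan is to derive this statement as an immediate specialization of Corollary~\ref{cor1}, in the same spirit that Corollary~\ref{cor1} was itself obtained from Theorem~\ref{th1} by fixing the function $\mathcal{F}$. Here the required specialization is to take the auxiliary control function $\varphi$ to be identically zero, so that the corollary becomes exactly the $\varphi \equiv 0$ instance of the preceding one.

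First I would check that the constant map $\varphi \equiv 0$ is admissible. Corollary~\ref{cor1} asks only that $\varphi : \mathbb{R}^{+} \to \mathbb{R}^{+}$ be continuous; since $\mathbb{R}^{+} = [0,\infty)$ contains $0$ in the convention used throughout (recall that $\sigma$ takes values in $\mathbb{R}^{+}$ and vanishes on $\ker \sigma$), the zero function is continuous and $\mathbb{R}^{+}$-valued, hence a legitimate choice. With this choice, for every $Y \subseteq A$ we have $\varphi(\sigma(TY)) = 0$ and $\varphi(\sigma(Y)) = 0$, whence
\[
\sigma(TY) + \varphi(\sigma(TY)) = \sigma(TY), \qquad \sigma(Y) + \varphi(\sigma(Y)) = \sigma(Y).
\]
Substituting these two identities into the contractive inequality of Corollary~\ref{cor1} collapses it precisely to
\[
\eta\big(\mathcal{O}(\xi;\sigma(TY))\big) \leq \alpha\big(\mathcal{O}(\xi;\eta(\sigma(Y)))\big)\,\beta\big(\mathcal{O}(\xi;\sigma(Y))\big),
\]
which is exactly the hypothesis assumed in the present corollary.

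All remaining data transfer unchanged: $A$ is non-empty, bounded, closed and convex, $T$ is continuous, $\alpha \in \Delta$, $\eta \in \Psi$, and $\beta$ is continuous with $\eta(t) > \beta(t)$ for every $t > 0$. Consequently Corollary~\ref{cor1} applies verbatim and furnishes a fixed point of $T$ in $A$, completing the argument. I do not anticipate any genuine obstacle: the proof is a pure specialization, and the only point deserving a moment's attention is the admissibility of $\varphi \equiv 0$, which rests solely on $0$ belonging to $\mathbb{R}^{+}$. I would additionally read the evident misprints in the statement charitably, taking $\eta \in \Psi$ in place of the undefined ``$\eta \in Eta$'' and dropping the vestigial clause $\mathcal{F} \in \mathbb{F}$, since $\mathcal{F}$ no longer appears once $\varphi \equiv 0$ has absorbed the role earlier played by the choice $\mathcal{F}(x,y) = x + y$.
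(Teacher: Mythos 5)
Your proposal is correct and matches the paper's own route exactly: the paper derives this corollary precisely by setting $\varphi \equiv 0$ in Corollary \ref{cor1}, which is what you do. Your extra care in verifying that the zero function is an admissible $\varphi$ and in reading ``$\eta \in Eta$'' as $\eta \in \Psi$ (and discarding the vestigial $\mathcal{F} \in \mathbb{F}$) only makes the specialization more explicit than the paper's one-line justification.
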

The subsequent result is related with Mizoguchi-Takahashi functions and also $\Omega$-type $\sigma$-set contractions. Further, it can be noted that if $\chi:[0,\infty)\to [0,1)$ is a non-decreasing function or a non-increasing function, then $\chi$ is an Mizoguchi-Takahashi function (in short, MT-function).
\begin{definition} \cite{MT}
Suppose that $\Upsilon$ denotes the class of Mizoguchi-Takahashi functions which satisfy the Mizoguchi-Takahasi condition,
\[\limsup_{s\to t^+}\chi(s)<1\]
for all $t\in [0,\infty).$
\end{definition}
This family consists of many a number of such functions. However in the following, we talk over an interesting collection of control functions which is required for our succeeding result.
\begin{definition} \cite{NA2}
Let $\Omega$ denote the class of all functions $\omega:[0,\infty) \to [0,\infty)$ satisfying:
\begin{enumerate}
\item[(i)]
$\omega$ is non-decreasing;
\item[(ii)]
$\omega(t)=0 \Leftrightarrow t=0.$
\end{enumerate}
\end{definition}
Now, we are all set to illustrate another fixed point result taking into account the aforementioned notions.
\begin{theorem}
Let $A$ be a non-empty, bounded, closed and convex subset of a Banach space $E$ and $T:A \to A$ be a continuous operator satisfying;
$$\omega(\mathcal{O}(\xi;\mathcal{F}(\sigma(TY),\varphi(\sigma(TY)))))\leq \chi(\mathcal{O}(\xi;\omega(\sigma(Y))))\omega(\mathcal{O}(\xi;\mathcal{F}(\sigma(Y),\varphi(\sigma(Y)))))$$
for any $\Phi \neq Y \subseteq A$, where $\sigma$ is an arbitrary MNC and $\mathcal{O}(\bullet,.)\in \Theta$, $\varphi : \mathbb{R}^{+} \to \mathbb{R}^{+}$ is a continuous function, $\chi \in \Upsilon$ and $\omega \in \Omega$. Then $T$ has at least one fixed point in $A$.
\begin{proof}
Let us consider the sequence $(A_n)$ by $A_0=A$ and $A_{n+1}=conv(TA_n)$, for $n\geq 0$. If $\mathcal{F}(\sigma(A_n),\varphi(\sigma(A_n)))=0$ for some natural number $n=n_0$, then as $\mathcal{F} \in \mathbb{F}$, we have
\begin{align*}
\sigma(A_{n_0})+\varphi(\sigma(A_{n_0}))\leq \mathcal{F}(\sigma(A_{n_0}),\varphi(\sigma(A_{n_0})))=& 0\\
\Rightarrow \sigma(A_{n_0})=& 0,
\end{align*}
as $\sigma$ and $\varphi$ are non-negative functions. Hence $A_{n_0}$ is a compact set and $T(A_{n_0})\subseteq conv (TA_{n_0})=A_{n_0+1}\subseteq A_{n_0}$. Hence, by Schauder fixed point theorem, $T$ has a fixed point in $A$.

Now, we consider the case when
$$\mathcal{F}(\sigma(A_n),\varphi(\sigma(A_n)))>0$$ for all $n\geq 0$. Then we have, 
\begin{align*}
\omega(\mathcal{O}(\xi;\mathcal{F}(\sigma(A_{n+1}),\varphi(\sigma(A_{n+1})))))\leq & \omega(\mathcal{O}(\xi;\mathcal{F}(\sigma(conv(TA_{n})),\varphi(\sigma(conv(TA_{n}))))))\\
=& \omega(\mathcal{O}(\xi;\mathcal{F}(\sigma(TA_{n}),\varphi(\sigma(TA_{n}))))))\\
\leq & \chi (\mathcal{O}(\xi;\omega(\sigma(A_n))))\omega(\mathcal{O}(\xi;\mathcal{F}(\sigma(A_n),\varphi(\sigma(A_n))))).
\end{align*}
Since $\chi<1$, we get
$$\omega(\mathcal{O}(\xi;\mathcal{F}(\sigma(A_{n+1}),\varphi(\sigma(A_{n+1})))))\leq \omega(\mathcal{O}(\xi;\mathcal{F}(\sigma(A_n),\varphi(\sigma(A_n))))).$$
This shows that the sequence $(\omega(\mathcal{O}(\xi;\mathcal{F}(\sigma(A_n),\varphi(\sigma(A_n))))))$ is non-decreasing and bounded below. Therefore there exists a real number $\rho\geq 0$ such that
$$\displaystyle \lim_{n\to \infty}\omega(\mathcal{O}(\xi;\mathcal{F}(\sigma(A_n),\varphi(\sigma(A_n)))))=\rho.$$
We claim that $\rho=0$. In contrary, if possible, let $\rho>0$. Since $\chi \in \Upsilon$, we have $$\displaystyle \lim_{t \to \rho^{+}} \chi(\mathcal{O}(\xi;t))<1$$ and $$\chi(\mathcal{O}(\xi;\rho))<1.$$
Then there exists $\lambda \in [0,1)$ and $\varepsilon>0$ such that $\chi(\mathcal{O}(\xi;t))\leq \lambda$ for all $t \in [\rho,\rho+\varepsilon)$. Hence there exists a natural number $n_0 \in \mathbb{N}$ such that
$$\rho\leq \omega(\mathcal{O}(\xi;\mathcal{F}(\sigma(A_n),\varphi(\sigma(A_n)))))\leq \rho+\varepsilon$$ for all $n\geq n_0$. Now we have,
\begin{align*}
\omega(\mathcal{O}(\xi;\mathcal{F}(\sigma(A_{n+1}),\varphi(\sigma(A_{n+1}))))\leq & \chi (\mathcal{O}(\xi;\omega(\sigma(A_n)))))\omega(\mathcal{O}(\xi;\mathcal{F}(\sigma(A_n),\varphi(\sigma(A_n)))))\\
\leq & \lambda \omega(\mathcal{O}(\xi;\mathcal{F}(\sigma(A_n),\varphi(\sigma(A_n))))).
\end{align*}
Hence,
\begin{align}\label{e21}
\omega(\mathcal{O}(\xi;\mathcal{F}(\sigma(A_{n+1}),\varphi(\sigma(A_{n+1})))))\leq & \lambda \omega(\mathcal{O}(\xi;\mathcal{F}(\sigma(A_n),\varphi(\sigma(A_n))))).
\end{align}
Taking limit $n \to \infty$ in \eqref{e21} and applying the property of $\mathcal{O}(\bullet,.)$ we get,
$$\rho\leq \lambda \rho.$$
Since $\lambda \in [0,1)$, we have $$\rho=0.$$
This implies that
$$\displaystyle \lim_{n \to \infty}\omega(\mathcal{O}(\xi;\mathcal{F}(\sigma(A_n),\varphi(\sigma(A_n)))))=0.$$
Since $(\omega(\mathcal{O}(\xi;\mathcal{F}(\sigma(A_n),\varphi(\sigma(A_n))))))$ is non-increasing and $\omega$ is a non-decreasing sequence, by the property of $\mathcal{O}(\bullet,.)$, we conclude that $(\mathcal{F}(\sigma(A_n),\varphi(\sigma(A_n))))$ is a decreasing sequence of positive reals. So there is a $\delta'>0$ such that
$$\displaystyle \lim_{n \to \infty} \mathcal{F}(\sigma(A_n),\varphi(\sigma(A_n))) =\delta'.$$
Therefore, $$\mathcal{F}(\sigma(A_n),\varphi(\sigma(A_n))) \geq \delta'$$ for all $n \in \mathbb{N}$. Hence by the property of $\mathcal{O}(\bullet,.)$ we get
$$\mathcal{O}(\xi;\mathcal{F}(\sigma(A_n),\varphi(\sigma(A_n))))\geq \mathcal{O}(\xi; \delta').$$
As $\omega$ is non-decreasing, we obtain
$$\omega(\mathcal{O}(\xi;\mathcal{F}(\sigma(A_n),\varphi(\sigma(A_n)))))\geq \omega( \mathcal{O}(\xi; \delta')).$$
Now, letting $n \to \infty$ in the previous inequality, we get
$$0\geq \omega( \mathcal{O}(\xi; \delta')).$$
From the above it follows that, $\mathcal{O}(\xi; \delta')=0$. Therefore,
$$\displaystyle \lim_{n\to \infty}\mathcal{F}(\sigma(A_n),\varphi(\sigma(A_n)))=0.$$
Since, $\mathcal{F}\in \mathbb{F}$, we have $$\displaystyle \lim_{n\to \infty} \sigma(A_n)+ \displaystyle \lim_{n\to \infty} \varphi(\sigma(A_n))\leq \displaystyle \lim_{n\to \infty}\mathcal{F}(\sigma(A_n),\varphi(\sigma(A_n)))=0.$$

Since $(A_{n})$ is a decreasing sequence of subsets, i.e., $A_{n+1} \subseteq A_n$ for all $n \in \mathbb{N}$, we can claim that $A_{ \infty}= \displaystyle \cap_{n=1}^{\infty} A_n$ is a non-empty, closed and convex subset of $A$. Also, we have $A_{\infty}$ is an element of $ker \sigma$. Therefore $A_{\infty}$ is compact and invariant under the mapping $T$. It follows from the Schauder fixed point theorem that $T$ has a fixed point in $A$.
\end{proof}
\end{theorem}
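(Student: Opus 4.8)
The plan is to run the classical Darbo-type iteration scheme, now driven by the Mizoguchi--Takahashi contraction condition in place of a single constant $k\in[0,1)$. First I would build the nested sequence of sets $A_0=A$ and $A_{n+1}=conv(TA_n)$ for $n\geq 0$. Since $TA_0=TA\subseteq A=A_0$ and $A$ is convex, $A_1=conv(TA_0)\subseteq A_0$, and an easy induction gives $A_{n+1}\subseteq A_n$ for all $n$. Thus $(A_n)$ is a decreasing sequence of non-empty, closed, convex subsets of $A$; each is bounded, hence lies in $\mathfrak{M}_E$, so $\sigma$ is defined on every $A_n$.

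Next I would dispose of the degenerate case. If $\mathcal{F}(\sigma(A_{n_0}),\varphi(\sigma(A_{n_0})))=0$ for some $n_0$, then the defining property $\max\{x,y\}\leq\mathcal{F}(x,y)$ of $\mathbb{F}$ together with the non-negativity of $\sigma$ and $\varphi$ forces $\sigma(A_{n_0})=0$. Hence $A_{n_0}\in\ker\sigma$ is relatively compact, and since $T(A_{n_0})\subseteq conv(TA_{n_0})=A_{n_0+1}\subseteq A_{n_0}$, the Schauder fixed point theorem immediately yields a fixed point of $T$. I may therefore assume $\mathcal{F}(\sigma(A_n),\varphi(\sigma(A_n)))>0$ for every $n$.

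In the main case I would write $b_n=\omega(\mathcal{O}(\xi;\mathcal{F}(\sigma(A_n),\varphi(\sigma(A_n)))))$. Applying the governing inequality with $Y=A_n$, using the invariance $\sigma(conv(TA_n))=\sigma(TA_n)$ (MNC axiom (iv)) and the bound $\chi<1$, gives $b_{n+1}\leq \chi(\cdots)\,b_n\leq b_n$, so $(b_n)$ is non-increasing and bounded below; set $\rho=\lim b_n\geq 0$. The crux of the argument, and the step I expect to be the main obstacle, is showing $\rho=0$. Assuming $\rho>0$, I would invoke the Mizoguchi--Takahashi condition $\limsup_{s\to\rho^+}\chi(s)<1$, transported through $\mathcal{O}(\xi;\cdot)$, to extract a constant $\lambda\in[0,1)$ and an $\varepsilon>0$ with $\chi(\mathcal{O}(\xi;t))\leq\lambda$ for all $t\in[\rho,\rho+\varepsilon)$. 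Because $b_n$ decreases to $\rho$ from above, for all large $n$ the argument $\mathcal{O}(\xi;\mathcal{F}(\cdots))$ sits in this one-sided window, giving $b_{n+1}\leq\lambda b_n$; passing to the limit forces $\rho\leq\lambda\rho$, contradicting $\lambda<1$. The delicate point is precisely this: the $\limsup$ control is only a right-neighborhood estimate, so I must use the monotone decreasing approach $b_n\downarrow\rho$ to guarantee the relevant arguments genuinely lie in $[\rho,\rho+\varepsilon)$ rather than merely tending to $\rho$.

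Finally, with $\rho=0$ secured, I would unwind the compositions: $b_n\to 0$, together with $\omega(t)=0\Leftrightarrow t=0$ and the continuity and monotonicity axioms of $\Theta$ in Definition \ref{AT}, yields $\mathcal{F}(\sigma(A_n),\varphi(\sigma(A_n)))\to 0$, and then $\max\{x,y\}\leq\mathcal{F}(x,y)$ gives $\sigma(A_n)\to 0$. By MNC axiom (vi) the intersection $A_\infty=\cap_{n=1}^{\infty}A_n$ is non-empty, and it is closed and convex as an intersection of such sets; moreover $A_\infty\in\ker\sigma$, so $A_\infty$ is compact. Since $T(A_\infty)\subseteq A_\infty$, a last application of the Schauder fixed point theorem produces a fixed point of $T$ in $A$, completing the argument.
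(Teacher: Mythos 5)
Your proposal follows the paper's own proof essentially step for step: the nested sequence $A_{n+1}=conv(TA_n)$, the degenerate case $\mathcal{F}(\sigma(A_{n_0}),\varphi(\sigma(A_{n_0})))=0$ settled by Schauder, the monotone limit $\rho$ of $\omega(\mathcal{O}(\xi;\mathcal{F}(\sigma(A_n),\varphi(\sigma(A_n)))))$ eliminated by the same Mizoguchi--Takahashi one-sided window argument yielding $\rho\leq\lambda\rho$, and the same conclusion via MNC axiom (vi) and a final application of Schauder. The only difference is cosmetic: your unwinding of $b_n\to 0$ into $\sigma(A_n)\to 0$, using monotonicity and the zero-iff-zero properties of $\omega$ and $\mathcal{O}(\xi;\cdot)$ directly, is cleaner than the paper's detour through the auxiliary limit $\delta'$, but it is the same route.
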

\section{An Application}
In this section, we employ our results obtained in the previous section, to find some sufficient conditions for the existence of solution(s) of the following non-linear quadratic integral equation 
\begin{equation}\label{eq1}
x(t)=g(t,x(t))+\lambda \int_{0}^{t} \mu_1(t,s)\zeta_1(s,x(s)) ds \int_{0}^{t} \mu_2(t,s)\zeta_2(s,x(s)) ds,
\end{equation}
for $t>0$, where $g,\mu_1,\mu_2,\zeta_1,\zeta_2$ are real valued continuous functions defined on $\mathbb{R}^+ \times  \mathbb{R}$ and $\lambda$ is a positive constant.

We use the notation $E$ to denote the set of all real valued continuous bounded functions which are defined on $(0,\infty)$. Then we know that $E$ is a Banach space with respect to the sup norm. We now recall a special type MNC on $E$. Let $A$ be a non-empty subset of $E$ and $L$ be a positive real number. For $x\in A$ and $\varepsilon>0$, we use the notation $w^L(x,\varepsilon)$ to denote the set $\sup \{|x(t)-x(u)| : t,u \in [0,L], |t-u| \leq \varepsilon\}$. Next we use the following notations 
\begin{align*}
&w^L(A;\varepsilon)=\sup \{w^L(x,\varepsilon):x\in A\}\\
&w_0^L(TA)=\lim_{\varepsilon\to 0} w^L(TA;\varepsilon)\\
&w_0(TA)=\lim_{L\to \infty} w_0^L(TA).
\end{align*}

Again we use the notations 
\begin{align*}
&A(t)=\{x(t):x\in A\}\\
&diam ~A(t)=\sup\left\{|x(t)-y(t)|:x,y\in A\right\}\\
&\alpha(A)=\limsup_{t\to \infty} diam ~A(t).
\end{align*}
Now  we define a real valued function $\sigma$ on $\mathfrak{M}_E$ by $$\sigma(A)=w_0(A)+\alpha(A).$$ Then is can be checked that $\sigma$ is an MNC on $E$, see \cite {BG, B10}. 

In order to present some sufficient conditions for the existence of the solution of the integral equation given by Equation \ref{eq1}, we need to restrict the class of operators $\mathcal{O}(\xi,t)$ discussed in Definition \ref{AT}, by adding some mild assumption, which is as follows: $$\mathcal{O}(\alpha \xi;t)\leq \alpha \mathcal{O}( \xi;t)$$ for all $t>0$, where $\alpha\in (0,1).$

Next, we prove the following theorem.
\begin{theorem}
Let us consider the Equation \ref{eq1} and  assume that the following conditions hold
\begin{enumerate}
\item [$(i)$] there exists a constant $\gamma$ with $0<\gamma<1$ such that $|g(t,x)-g(t,y)| \leq \gamma |x-y|$ holds for all $x,y \in \mathbb{R}$ and $t\geq 0$;
\item [$(ii)$] $\displaystyle \lim_{t\to \infty} \left|\int_{0}^{t} [\mu_1(t,s)\zeta_1(s,x(s))-\mu_1(t,s)\zeta_1(s,y(s))]ds\right|=0$ and $$\lim_{t\to \infty} \left|\int_{0}^{t} [\mu_2(t,s)\zeta_2(s,x(s))-\mu_2(t,s)\zeta_2(s,y(s))]ds\right|=0$$ uniformly with respect to $x,y\in E$;
\item [$(iii)$] there exist two real numbers $A_1,A_2>0$ such that $|\int_{0}^{t}\mu_1(t,s)\zeta_1(s,x(s)) ds|\leq A_1$ and $|\int_{0}^{t}\mu_2(t,s)\zeta_2(s,x(s)) ds|\leq A_2$ for all $t>0$ and for all $x,y\in E$.
\end{enumerate}
Then the integral equation given by Equation \ref{eq1}, has a solution.
\end{theorem}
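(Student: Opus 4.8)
The plan is to recast Equation \ref{eq1} as a fixed point problem for the operator $T$ on $E$ given by
$$(Tx)(t)=g(t,x(t))+\lambda\, I_1(t,x)\, I_2(t,x),\qquad I_i(t,x):=\int_0^t\mu_i(t,s)\zeta_i(s,x(s))\,ds,$$
and then to verify the hypotheses of Theorem \ref{th1} for $T$ on a suitable ball $B_r=\{x\in E:\|x\|\le r\}$, which is non-empty, bounded, closed and convex. First I would check that $Tx\in E$ whenever $x\in E$, which is routine from the continuity of $g,\mu_i,\zeta_i$ together with the boundedness hypothesis $(iii)$. To locate the invariant ball I would estimate, using $(i)$ and $(iii)$,
$$\|Tx\|\le \gamma\|x\|+\sup_{t\ge0}|g(t,0)|+\lambda A_1A_2,$$
so that, setting $r:=\big(\sup_{t\ge0}|g(t,0)|+\lambda A_1A_2\big)/(1-\gamma)$ (finite since $\gamma<1$, provided $t\mapsto g(t,0)$ is bounded), one obtains $T(B_r)\subseteq B_r$. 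The continuity of $T$ on $B_r$ is the next step: the $g$-part is handled by the Lipschitz bound $(i)$, while continuity of the quadratic integral part follows from a standard argument splitting the product difference and passing $x_n\to x$ in the sup-norm, using the continuity of $\mu_i,\zeta_i$ and the uniform bounds $(iii)$.

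The core of the argument is the measure-of-non-compactness estimate $\sigma(TY)\le\gamma\,\sigma(Y)$ for every $Y\subseteq B_r$, where $\sigma=w_0+\alpha$, and I would treat the two components separately. For the diameter component, fix $x,y\in Y$ and use the identity
$$I_1(t,x)I_2(t,x)-I_1(t,y)I_2(t,y)=I_1(t,x)\big(I_2(t,x)-I_2(t,y)\big)+I_2(t,y)\big(I_1(t,x)-I_1(t,y)\big)$$
together with $(iii)$ to obtain
$$|(Tx)(t)-(Ty)(t)|\le \gamma|x(t)-y(t)|+\lambda A_1\,|I_2(t,x)-I_2(t,y)|+\lambda A_2\,|I_1(t,x)-I_1(t,y)|.$$
Hypothesis $(ii)$ forces the last two terms to vanish as $t\to\infty$ uniformly in $x,y$, whence $\alpha(TY)\le\gamma\,\alpha(Y)$. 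For the modulus-of-continuity component I would bound $w^L(Tx,\varepsilon)$ by splitting $(Tx)(t)-(Tx)(u)$ into a $g$-difference and an integral-product difference; the Lipschitz condition $(i)$ contributes the factor $\gamma$ through $\gamma\,w^L(x,\varepsilon)$, while the remaining pieces (the continuity of $g$ in its first slot and the $t$-increments of $I_1,I_2$) tend to $0$ as $\varepsilon\to0$, using the uniform continuity of $\mu_i$ on $[0,L]^2$ and the boundedness of $\zeta_i$ on $[0,L]\times[-r,r]$. Letting $\varepsilon\to0$ and then $L\to\infty$ yields $w_0(TY)\le\gamma\,w_0(Y)$, and adding the two estimates gives $\sigma(TY)\le\gamma\,\sigma(Y)$.

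Finally, I would fit this contraction into Theorem \ref{th1} through the trivial choices $\eta=\mathrm{id}$, $\varphi\equiv0$, $\mathcal{F}(x,y)=x+y$, $\mathcal{O}(\xi;t)=t$, a constant function belonging to $\Delta$ with value $k\in[\gamma/c,1)$, and $\beta(t)=ct$ for a constant $c\in(\gamma,1)$; with these the condition $\eta(t)>\beta(t)$ holds for $t>0$, and the required inequality reduces to $\sigma(TY)\le kc\,\sigma(Y)$, which follows from the established bound since $kc\ge\gamma$. Theorem \ref{th1} then guarantees a fixed point of $T$ in $B_r$, which is precisely a solution of Equation \ref{eq1}. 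I expect the main obstacle to be the $w_0$-estimate: one must verify with care that the $t$-increments of the two integral operators contribute nothing in the double limit $\varepsilon\to0$, $L\to\infty$, uniformly over the whole family $Y$, and confirm that the uniformity built into hypothesis $(ii)$ genuinely suffices to pass the $\limsup$ through the product-difference bound in the $\alpha$-component.
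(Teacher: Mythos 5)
Your proposal is correct and takes essentially the same route as the paper: the same operator $T$, the same measure $\sigma=w_0+\alpha$, the same two component estimates $w_0(TY)\leq\gamma\,w_0(Y)$ and $\alpha(TY)\leq\gamma\,\alpha(Y)$ (the latter from hypothesis $(ii)$), and then Theorem \ref{th1} applied with essentially degenerate choices of the control functions (the paper takes $\mathcal{F}=\max$, $\eta=\mathrm{id}$, $\varphi(t)=t/2$, constant $\alpha$ and linear $\beta$ with $\alpha_1\alpha_2\geq\gamma$, which amounts to the same reduction as yours). If anything, you are more careful than the paper, which applies Theorem \ref{th1} on all of $E$ without producing an invariant bounded, closed, convex subset, does not verify continuity of $T$, and tacitly needs your flagged extra assumption that $t\mapsto g(t,0)$ is bounded in order for $T$ to map $E$ into itself.
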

\begin{proof}
Let $E$ be the set of all continuous bounded functions which are defined on $(0,\infty)$. Then we know that $E$ is a Banach space with respect to the sup norm. Note that if we choose $x,y\in E$, then $$g(t,x(t))+\lambda \int_{0}^{t} \mu_1(t,s)\zeta_1(s,x(s)) ds \int_{0}^{t} \mu_2(t,s)\zeta_2(s,x(s)) ds $$ is also continuous on $(0,\infty)$, i.e., $$g(t,x(t))+\lambda \int_{0}^{t} \mu_1(t,s)\zeta_1(s,x(s)) ds \int_{0}^{t} \mu_2(t,s)\zeta_2(s,x(s)) ds \in E.$$ So the mapping $T$ defined on $E$ by $$Tx(t)=g(t,x(t))+\lambda \int_{0}^{t} \mu_1(t,s)\zeta_1(s,x(s)) ds \int_{0}^{t} \mu_2(t,s)\zeta_2(s,x(s)) ds $$ is indeed a self-mapping on $E$. It is an easy task to note that $x(t)$ is a solution of Equation of \ref{eq1} if and only if $x(t)$ is a fixed point of $T$. 

Let $A$ be an arbitrary non-empty subset of $E$. Then we know that
\begin{align*}
\mathcal{O}(\xi;\sigma(TA))&=\mathcal{O}(\xi;w_0(TA)+\alpha(TA)), ~\mbox{where}
\end{align*}
\begin{align*}
&w_0(TA)=\lim_{L\to \infty} w_0^L(TA)=\lim_{L\to \infty} \lim_{\varepsilon\to 0} w^L(TA;\varepsilon)\\
&w^L(TA;\varepsilon)=\sup \{w^L(x,\varepsilon):x\in TA\}\\
&w^L(x,\varepsilon)=\sup \{|x(t)-x(u)| : t,u \in [0,L], |t-u| \leq \varepsilon\},
\end{align*}
and
\begin{align*}
\alpha(TA)=\limsup_{t\to \infty} diam ~TA(t)
\end{align*}
where $$diam ~TA(t)=\sup\left\{|Tx(t)-Ty(t)|:x,y\in A\right\}.$$
Now let $t,u\in [0,L]$ be such that $|t-u|\leq \varepsilon$. Without loss of generality, we assume that $t\geq u$. Then we have
\begin{align*}
&|Tx(t)-Tx(u)|\\
&=\Big|g(t,x(t))+\lambda \int_{0}^{t} \mu_1(t,s)\zeta_1(s,x(s)) ds \int_{0}^{t} \mu_2(t,s)\zeta_2(s,x(s)) ds-g(t,x(u))\\ &+\lambda \int_{0}^{u} \mu_1(u,s)\zeta_1(s,x(s)) ds \int_{0}^{u} \mu_2(u,s)\zeta_2(s,x(s)) ds \Big|\\
&\leq |g(t,x(t))-g(t,x(u))| +\lambda \Big|\int_{0}^{t} \mu_1(t,s)\zeta_1(s,x(s)) ds \int_{0}^{t} \mu_2(t,s)\zeta_2(s,x(s)) ds\\ &-\int_{0}^{u} \mu_1(u,s)\zeta_1(s,x(s)) ds \int_{0}^{u} \mu_2(u,s)\zeta_2(s,x(s)) ds\Big|\\
&\leq |g(t,x(t))-g(t,x(u))|+\lambda \Big|\int_{0}^{t} \mu_1(t,s)\zeta_1(s,x(s)) ds \int_{0}^{t} \mu_2(t,s)\zeta_2(s,x(s)) ds\\ & -\int_{0}^{t} \mu_1(t,s)\zeta_1(s,x(s)) ds\int_{0}^{t} \mu_2(u,s)\zeta_2(s,x(s)) ds\Big | \\ & +\lambda \Big|\int_{0}^{t} \mu_1(t,s)\zeta_1(s,x(s)) ds\int_{0}^{t} \mu_2(u,s)\zeta_2(s,x(s)) ds\\&- \int_{0}^{t} \mu_1(u,s)\zeta_1(s,x(s)) ds \int_{0}^{t} \mu_2(u,s)\zeta_2(s,x(s)) ds\Big|\\ &  +\lambda \Big|\int_{0}^{t} \mu_1(u,s)\zeta_1(s,x(s)) ds \int_{0}^{t} \mu_2(u,s)\zeta_2(s,x(s)) ds \\ & - \int_{0}^{u} \mu_1(u,s)\zeta_1(s,x(s)) ds \int_{0}^{u} \mu_2(u,s)\zeta_2(s,x(s)) ds\Big|\end{align*}
\begin{align*}
& = |g(t,x(t))-g(t,x(u))|+\lambda \Big|\int_{0}^{t} \mu_1(t,s)\zeta_1(s,x(s)) ds \int_{0}^{t} \mu_2(t,s)\zeta_2(s,x(s)) ds\\ & -\int_{0}^{t} \mu_1(t,s)\zeta_1(s,x(s)) ds\int_{0}^{t} \mu_2(u,s)\zeta_2(s,x(s)) ds\Big | \\ & + \lambda \Big|\int_{0}^{t} \mu_1(t,s)\zeta_1(s,x(s)) ds\int_{0}^{t} \mu_2(u,s)\zeta_2(s,x(s)) ds\\&- \int_{0}^{t} \mu_1(u,s)\zeta_1(s,x(s)) ds \int_{0}^{t} \mu_2(u,s)\zeta_2(s,x(s)) ds\Big|\\ & +\lambda \Big|\int_{0}^{t} \mu_1(u,s)\zeta_1(s,x(s)) ds \int_{0}^{u} \mu_2(u,s)\zeta_2(s,x(s)) ds \\&+ \int_{0}^{t} \mu_1(u,s)\zeta_1(s,x(s)) ds \int_{u}^{t} \mu_2(u,s)\zeta_2(s,x(s)) ds\\ & - \int_{0}^{u} \mu_1(u,s)\zeta_1(s,x(s)) ds \int_{0}^{u} \mu_2(u,s)\zeta_2(s,x(s)) ds\Big|\\
& = |g(t,x(t))-g(t,x(u))|+\lambda \Big|\int_{0}^{t} \mu_1(t,s)\zeta_1(s,x(s)) ds \int_{0}^{t} \mu_2(t,s)\zeta_2(s,x(s)) ds\\ & -\int_{0}^{t} \mu_1(t,s)\zeta_1(s,x(s)) ds\int_{0}^{t} \mu_2(u,s)\zeta_2(s,x(s)) ds\Big |\\
& + \lambda \Big|\int_{0}^{t} \mu_1(t,s)\zeta_1(s,x(s)) ds\int_{0}^{t} \mu_2(u,s)\zeta_2(s,x(s)) ds\\ & - \int_{0}^{t} \mu_1(u,s)\zeta_1(s,x(s)) ds \int_{0}^{t} \mu_2(u,s)\zeta_2(s,x(s)) ds\Big|\\ & +\lambda\Big| \int_{0}^{u} \mu_2(u,s)\zeta_2(s,x(s)) ds \Big\{\int_{0}^{t} \mu_1(u,s)\zeta_1(s,x(s)) ds -\int_{0}^{u} \mu_1(u,s)\zeta_1(s,x(s)) ds\Big\}\\ & +\int_{0}^{t} \mu_1(u,s)\zeta_1(s,x(s)) ds \int_{u}^{t} \mu_2(u,s)\zeta_2(s,x(s)) ds\Big|.
\end{align*}
Therefore, 
\begin{align}
&|Tx(t)-Tx(u)|\nonumber\\
& \leq \gamma  |x(t)-x(u)|+\lambda \Big|\int_{0}^{t} \mu_1(t,s)\zeta_1(s,x(s)) ds\Big|\nonumber\\ &\Big| \int_{0}^{t}\Big\{ \mu_2(t,s)\zeta_2(s,x(s))  - \mu_2(u,s)\zeta_2(s,x(s)) ds\Big\}\Big | \nonumber \\ & +\lambda \Big|\int_{0}^{t} \mu_2(u,s)\zeta_2(s,x(s)) ds\Big|\Big|\Big\{\int_{0}^{t} \mu_1(t,s)\zeta_1(s,x(s)) -  \mu_1(u,s)\zeta_1(s,x(s)) \Big\}ds \Big|\nonumber\\ & + \lambda\Big| \int_{0}^{u} \mu_1(u,s)\zeta_1(s,x(s)) ds \Big|\Big|\int_{u}^{t} \mu_2(u,s)\zeta_2(s,x(s))\Big|\nonumber\\ &+\lambda\Big|\int_{0}^{u} \mu_2(u,s)\zeta_2(s,x(s)) ds \Big|\Big|\int_{u}^{t} \mu_2(u,s)\zeta_2(s,x(s)) ds\Big|\nonumber\\
%
&\leq \gamma |x(t)-x(u)| +\lambda A_1\Big| \int_{0}^{t} \Big\{\mu_2(t,s)\zeta_2(s,x(s))- \mu_2(u,s)\zeta_2(s,x(s))\Big\} ds\Big|\nonumber\\ & +\lambda \Big|\int_{0}^{u} \mu_2(u,s)\zeta_2(s,x(s)) ds\Big|\Big|\Big\{\int_{0}^{t} \mu_1(t,s)\zeta_1(s,x(s)) ds-  \mu_1(u,s)\zeta_1(s,x(s)) \Big\}ds \Big| \nonumber\\ & +\lambda \Big|\int_{u}^{t} \mu_2(u,s)\zeta_2(s,x(s)) ds\Big|\Big|\int_{0}^{t} \Big\{\mu_1(t,s)\zeta_1(s,x(s)) ds-  \mu_1(u,s)\zeta_1(s,x(s)) \Big\}ds \Big| \nonumber\\& +\lambda A_1 \Big|\int_{u}^{t} \mu_2(u,s)\zeta_2(s,x(s))\Big| + \lambda A_2\Big|\int_{u}^{t}\mu_2(u,s)\zeta_2(s,x(s))ds\Big|\nonumber\\
%
&\leq  \gamma |x(t)-x(u)|+\lambda A_1\Big| \int_{0}^{t} \Big\{\mu_2(t,s)\zeta_2(s,x(s))- \mu_2(u,s)\zeta_2(s,x(s))\Big\} ds\Big|\nonumber\\ & +\lambda A_2 \Big|\int_{0}^{t}\Big\{ \mu_1(t,s)\zeta_1(s,x(s)) ds-  \mu_1(u,s)\zeta_1(s,x(s))\Big\} ds \Big| \nonumber \\ & +\lambda \Big|\int_{u}^{t} \mu_2(u,s)\zeta_2(s,x(s)) ds \Big|\Big|\int_{0}^{t}\Big\{ \mu_1(t,s)\zeta_1(s,x(s)) ds-  \mu_1(u,s)\zeta_1(s,x(s))\Big\} ds \Big| \nonumber\\& +\lambda A_1 \Big|\int_{u}^{t} \mu_2(u,s)\zeta_2(s,x(s))\Big| +\lambda A_2 \Big|\Big|\int_{u}^{t} \mu_2(u,s)\zeta_2(s,x(s)) ds\Big|.\label{ttt1}
\end{align}
 Since $x(s)$ is continuous and  $s\in[0,L]$, we have $x(s)\in [a,b]$ for some $a,b \in \mathbb{R}$ with $a<b$ for all $s\in[0,L]$. Therefore, the functions $\mu_1:[0.L]\times [0,L] \to \mathbb{R}$ and $\zeta_1:[0.L]\times [a,b] \to \mathbb{R}$ are continuous and hence there exists a real number $B_1^L>0$ such that $|\mu_1(u,s)\zeta_1(s,x(s))|< B_1^L$ for all $u, s\in[0,L]$. Similarly there exists a real number $B_2^L>0$ such that $|\mu_2(u,s)\zeta_1(s,x(s))|< B_2^L$ for all $u, s\in[0,L]$. Next, we assume that 
\[w^L(\mu_1,\zeta_1,\varepsilon)= \sup \{|\mu_1(t,s)\zeta_1(s,x)-\mu_1(u,s)\zeta_1(s,x)|:u,t \in [0,L], |t-u|\leq \varepsilon\}\]
and
\[w^L(\mu_2,\zeta_2,\varepsilon)= \sup \{|\mu_2(t,s)\zeta_2(s,x)-\mu_2(u,s)\zeta_2(s,x)|:u,t \in [0,L], |t-u|\leq \varepsilon\}.\]
Using the above facts in \eqref{ttt1}, we get
\begin{align}
|Tx(t)-Tx(u)|&<\gamma |x(t)-x(u)|+\lambda L A_1  w^L(\mu_2,\zeta_2,\varepsilon)+\lambda L A_2  w^L(\mu_1,\zeta_1,\varepsilon) \nonumber \\ &+ \lambda L  B_2^L w^L(\mu_1,\zeta_1,\varepsilon) |t-u| +\lambda A_1 B_2^L |t-u|+\lambda A_2 B_2^L |t-u|. \label{eq3}
\end{align}
Let $\Lambda = \lambda L A_1  w^L(\mu_2,\zeta_2,\varepsilon)+\lambda L A_2  w^L(\mu_1,\zeta_1,\varepsilon)$ and $G(t,u)=\lambda L  B_2^L w^L(\mu_1,\zeta_1,\varepsilon) |t-u| +\lambda A_1 B_2^L |t-u|+\lambda A_2 B_1^L |t-u| $. Then from \eqref{eq3}, we get 
\begin{equation*} \label{eq4}
|Tx(t)-Tx(u)| <\gamma |x(t)-x(u)| +\Lambda+ G(t,u),
\end{equation*}
for all $t,u \in [0,L]$ and $|t-u|\leq \varepsilon$. So we have
\begin{align*}
&\sup\{|Tx(t)-Tx(u)|:t,u \in [0,L], |t-u|\leq \varepsilon\}\\&\leq \gamma \sup\{|x(t)-x(u)|:t,u \in [0,L], |t-u|\leq \varepsilon\}+\Lambda\\& +\sup\{G(t,u):t,u \in [0,L], |t-u|\leq \varepsilon\}\\
&\Rightarrow w^L(Tx,\varepsilon) \leq \gamma w^L(x,\varepsilon) +\Lambda+ \sup\{G(t,u):t,u \in [0,L], |t-u|\leq \varepsilon\}.\\
\end{align*}
The above relation holds for all $x\in A$. Therefore we have
\begin{align}
w^L(TA,\varepsilon) &\leq \gamma w^L(A,\varepsilon) +\Lambda+ \sup\{G(t,u):t,u \in [0,L], |t-u|\leq \varepsilon\}\nonumber\\
\Rightarrow \lim_{\varepsilon \to 0}w^L(TA,\varepsilon) &\leq \gamma \lim_{\varepsilon \to 0} w^L(A,\varepsilon) 
+\lim_{\varepsilon \to 0}\Lambda+ \lim_{\varepsilon \to 0}\sup\{G(t,u):t,u \in [0,L], |t-u|\leq \varepsilon\}\nonumber\\
\Rightarrow w_0^L(TA)&\leq \gamma w_0^L(A)\nonumber\\
\Rightarrow \lim_{L \to \infty}w_0^L(TA)&\leq \gamma \lim_{L \to \infty} w_0^L(A)\nonumber\\
\Rightarrow w_0(TA)&\leq \gamma w_0(A). \label{ee1}
\end{align}

Now for any $x,y \in A$, we have
\begin{align*}
&|Tx(t)-Ty(t)|\\
&=\Bigg|g(t,x(t))+\lambda \int_{0}^{t} \mu_1(t,s)\zeta_1(s,x(s)) ds \int_{0}^{t} \mu_2(t,s)\zeta_2(s,x(s)) ds - g(t,y(t))\\ &+\lambda \int_{0}^{t} \mu_1(t,s)\zeta_1(s,y(s)) ds \int_{0}^{t} \mu_2(t,s)\zeta_2(s,y(s)) ds \Bigg|\\
&\leq |g(t,x(t))-g(t,y(t))|+\lambda \Bigg|\int_{0}^{t} \mu_1(t,s)\zeta_1(s,x(s)) ds \int_{0}^{t} \mu_2(t,s)\zeta_2(s,x(s)) ds\\&-\int_{0}^{t} \mu_1(t,s)\zeta_1(s,y(s)) ds \int_{0}^{t} \mu_2(t,s)\zeta_2(s,y(s)) ds \Bigg|\\
&\leq \gamma |x(t)-y(t)|+\lambda\Bigg|\int_{0}^{t} \mu_1(t,s)\zeta_1(s,x(s)) ds -\int_{0}^{t} \mu_1(t,s)\zeta_1(s,y(s)) ds \Bigg|\\ & \Bigg|\int_{0}^{t} \mu_2(t,s)\zeta_2(s,x(s)) ds -\int_{0}^{t} \mu_2(t,s)\zeta_2(s,y(s)) ds\Bigg|\\&+\lambda\Bigg|\int_{0}^{t} \mu_1(t,s)\zeta_1(s,y(s)) ds\Bigg| \Bigg|\int_{0}^{t} \mu_2(t,s)\zeta_2(s,x(s)) ds -\int_{0}^{t} \mu_2(t,s)\zeta_2(s,y(s)) ds\Bigg|\\& +\lambda\Bigg|\int_{0}^{t} \mu_2(t,s)\zeta_2(s,y(s)) ds\Bigg| \Bigg|\int_{0}^{t} \mu_1(t,s)\zeta_1(s,x(s)) ds -\int_{0}^{t} \mu_1(t,s)\zeta_1(s,y(s)) ds\Bigg|\\
&\leq \gamma |x(t)-y(t)|+\lambda\Bigg|\int_{0}^{t} \mu_1(t,s)\zeta_1(s,x(s)) ds -\int_{0}^{t} \mu_1(t,s)\zeta_1(s,y(s)) ds \Bigg|\\ & \Bigg|\int_{0}^{t} \mu_2(t,s)\zeta_2(s,x(s)) ds -\int_{0}^{t} \mu_2(t,s)\zeta_2(s,y(s)) ds\Bigg|\\&+\lambda A_1\Bigg|\int_{0}^{t} \mu_2(t,s)\zeta_2(s,x(s)) ds -\int_{0}^{t} \mu_2(t,s)\zeta_2(s,y(s)) ds\Bigg| \\ &+\lambda A_2 \Bigg|\int_{0}^{t} \mu_1(t,s)\zeta_1(s,x(s)) ds -\int_{0}^{t} \mu_1(t,s)\zeta_1(s,y(s)) ds\Bigg|\\
&\leq \gamma diam~A(t) +\lambda\Bigg|\int_{0}^{t} \mu_1(t,s)\zeta_1(s,x(s)) ds -\int_{0}^{t} \mu_1(t,s)\zeta_1(s,y(s)) ds \Bigg|\\ & \Bigg|\int_{0}^{t} \mu_2(t,s)\zeta_2(s,x(s)) ds -\int_{0}^{t} \mu_2(t,s)\zeta_2(s,y(s)) ds\Bigg|\\&+\lambda A_1\Bigg|\int_{0}^{t} \mu_2(t,s)\zeta_2(s,x(s)) ds -\int_{0}^{t} \mu_2(t,s)\zeta_2(s,y(s)) ds\Bigg| \\ &+\lambda A_2 \Bigg|\int_{0}^{t} \mu_1(t,s)\zeta_1(s,x(s)) ds -\int_{0}^{t} \mu_1(t,s)\zeta_1(s,y(s)) ds\Bigg|.
\end{align*}
Therefore, 
\begin{align*}
diam ~TA(t)&\leq \gamma~~ diam~A(t)\nonumber\\& +\sup_{x,y\in A}\Bigg\{\lambda\Bigg|\int_{0}^{t} \mu_1(t,s)\zeta_1(s,x(s)) ds -\int_{0}^{t} \mu_1(t,s)\zeta_1(s,y(s)) ds \Bigg|\nonumber\\ & \Bigg|\int_{0}^{t} \mu_2(t,s)\zeta_2(s,x(s)) ds -\int_{0}^{t} \mu_2(t,s)\zeta_2(s,y(s)) ds\Bigg|\nonumber\\&+\lambda A_1\Bigg|\int_{0}^{t} \mu_2(t,s)\zeta_2(s,x(s)) ds -\int_{0}^{t} \mu_2(t,s)\zeta_2(s,y(s)) ds\Bigg| \nonumber\\ &+\lambda A_2 \Bigg|\int_{0}^{t} \mu_1(t,s)\zeta_1(s,x(s)) ds -\int_{0}^{t} \mu_1(t,s)\zeta_1(s,y(s)) ds\Bigg|\Bigg\}\nonumber\\
\end{align*}
From the previous inequality, we obtain,
\begin{align}
&\limsup_{t\to \infty} diam ~TA(t)\nonumber\\&\leq \gamma \limsup_{t\to \infty} diam ~TA(t)\nonumber\\& + \lambda \limsup_{t\to \infty}\sup_{x,y\in A}\Bigg\{\Bigg|\int_{0}^{t} \mu_1(t,s)\zeta_1(s,x(s)) ds -\int_{0}^{t} \mu_1(t,s)\zeta_1(s,y(s)) ds \Bigg|\nonumber\\& \Bigg|\int_{0}^{t} \mu_2(t,s)\zeta_2(s,x(s)) ds -\int_{0}^{t} \mu_2(t,s)\zeta_2(s,y(s)) ds\Bigg|\Bigg\}\nonumber\\&+\lambda A_1 \limsup_{t\to \infty}\sup_{x,y\in A}\Bigg\{\Bigg|\int_{0}^{t} \mu_2(t,s)\zeta_2(s,x(s)) ds -\int_{0}^{t} \mu_2(t,s)\zeta_2(s,y(s)) ds\Bigg|\Bigg\}\nonumber \\ &+\lambda A_2 \limsup_{t\to \infty}\sup_{x,y\in A}\Bigg\{\Bigg|\int_{0}^{t} \mu_1(t,s)\zeta_1(s,x(s)) ds -\int_{0}^{t} \mu_1(t,s)\zeta_1(s,y(s)) ds\Bigg|\Bigg)\nonumber\\
\Rightarrow \alpha(TA)&\leq \gamma \alpha(A) ~~~[\mbox{using assumption} ~~(ii)]\label{ee2}.
\end{align}
Adding \eqref{ee1} and \eqref{ee2}, we get 
\begin{align}
w_0(TA)+\alpha(TA)&\leq \gamma\{w_0(A)+\alpha(A)\} \nonumber\\
\Rightarrow \sigma(TA) &\leq \gamma\sigma(A)\nonumber\\
\Rightarrow \mathcal{O}(\xi;\sigma(TA))&\leq \gamma \mathcal{O}(\xi;\sigma(A))\label{ee3}.
\end{align}
We choose two real numbers $\alpha_1$ and $\alpha_2$ such that $0<\alpha_1,\alpha_2<1$ and $\alpha_1 \alpha_2\geq \gamma$. Let us choose $\mathcal{F}(x,y)=\max\{x,y\}$, $\eta(t)=t$, $\beta(t)=\alpha_1(t)$, $\varphi(t)=\frac{t}{2}$ and $\alpha(t)=\alpha_2$ for all $t$. Then we have 
\begin{align*}
\eta(\mathcal{O}(\xi;\mathcal{F}(\sigma(TA);\varphi(\sigma(TA)))))=\mathcal{O}(\xi;\sigma(TA))
\end{align*} and 
\begin{align*}
\alpha(\mathcal{O}(\xi;\eta(\sigma(A))))\beta (\mathcal{O}(\xi;\mathcal{F}(\sigma(A),\varphi(\sigma(A))))=\alpha_1 \alpha_2 \mathcal{O}(\xi;\sigma(A)).
\end{align*}
Therefore, from \eqref{ee3}, we obtain 
\begin{align*}
\eta(\mathcal{O}(\xi;\mathcal{F}(\sigma(TA);\varphi(\sigma(TA))))) \leq \alpha(\mathcal{O}(\xi;\eta(\sigma(A))))\beta (\mathcal{O}(\xi;\mathcal{F}(\sigma(A),\varphi(\sigma(A)))).
\end{align*}
The above relation is true for all non-empty subsets  $A$ of $E$. So by Theorem \ref{th1}, $T$ has a fixed point in $E$. Hence the integral equation given by Equation \ref{eq1}, has a solution.
\end{proof}

\noindent{\bf Acknowledgements:}\\
The authors would like to thank Prof. D.R. Sahu for valuable comments during the preparation of this manuscript. The second named author would like to convey his cordial thanks to CSIR, New Delhi, India for their financial support.  


\begin{thebibliography}{10}

\bibitem{AOW}
R.P. Agarwal, D.~O'Regan, and P.J.Y. Wong.
\newblock {\em Positive Solutions of Differential, Difference and Integral
  Equations}.
\newblock Kluwer Academic, Dordrecht, 1999.

\bibitem{AA}
A.~Aghajani and M.~Aliaskari.
\newblock Commuting mappings and generalization of {D}arbo's fixed point
  theorem.
\newblock {\em Math. Sci. Lett.}, 4(2):187--190, 2015.

\bibitem{AAM}
A.~Aghajani, R.~Allahyari, and M.~Mursaleen.
\newblock A generalization of {D}arbo's theorem with application to the
  solvability of systems of integral equations.
\newblock {\em J. Comput. Appl. Math.}, 260:68--77, 2014.

\bibitem{AT}
I.~Altun and D.~Turkoglu.
\newblock A fixed point theorem for mappings satisfying a general contractive
  condition of operator type.
\newblock {\em J. Comput. Anal. Appl.}, 9(1):9--14, 2007.

\bibitem{AMR}
R.~Arab, M.~Mursaleen, and S.M.H. Rizvi.
\newblock Positive solution of a quadratic integral equation using
  generalization of {D}arbo's fixed point theorem.
\newblock {\em Numer. Funct. Anal. Optim.}, 40(10):1150--1168, 2019.

\bibitem{ANCB}
R.~Arab, H.K. Nashine, N.H. Can, and  T.T. Binh.
\newblock Solvability of functional-integral equations (fractional order) using measure of noncompactness.
\newblock {\em Adv. Difference Equ.}, 2020:12, 2020.

\bibitem{A2}
I.K. Argyros.
\newblock Quadratic equation and applications to {C}handrasekhar’s and
  related equations.
\newblock {\em Bull. Austral. Math. Soc.}, 32:275--282, 1985.

\bibitem{B10}
J.~Bana\'s.
\newblock Measures of noncompactness in the space of continuous tempered
  functions.
\newblock {\em Demonstr. Math.}, 14(1):127--133, 1981.

\bibitem{BG}
J.~Bana\'s and K.~Goebel.
\newblock {\em Measures of noncompactness in {B}anach spaces, in: {L}ecture
  {N}otes in {P}ure and {A}pplied {M}athematics, vol. 60}.
\newblock Marcel Dekker, New York, 1980.

\bibitem{B9}
L.W. Busbridge.
\newblock {\em The Mathematics of Radiative Transfer}.
\newblock Cambridge University Press, Cambridge, 1960.

\bibitem{CL}
L.S. Cai and J.~Liang.
\newblock New generalizations of {D}arbo's fixed point theorem.
\newblock {\em Fixed Point Theory Appl.}, 2015:156, 2015.

\bibitem{CZ}
K.M. Case and P.F. Zweifel.
\newblock {\em Linear Transport Theory}.
\newblock Addison-Wesley, Reading MA, 1967.

\bibitem{CT}
J.~Chen and X.~Tang.
\newblock Generalizations of {D}arbo's fixed point theorem via simulation
  functions with application to functional integral equations.
\newblock {\em J. Comput. Appl. Math.}, 296:564--575, 2016.

\bibitem{C14}
C.~Corduneanu.
\newblock {\em Integral Equations and Applications}.
\newblock Cambridge University Press, Cambridge, 1991.

\bibitem{D4}
G.~Darbo.
\newblock Punti unitti in transformazioni a condominio non compatto.
\newblock {\em Rend. Semin. Mat. Univ. Padova}, 24:84--92, 1955.

\bibitem{D2}
K.~Deimling.
\newblock {\em Nonlinear Functional Analysis}.
\newblock Springer-Verlag, Berlin, 1985.

\bibitem{GDC}
H.~Garai, L.K. Dey, and A.~Chanda.
\newblock Positive solutions to a fractional thermostat model in {B}anach
  spaces via fixed point results.
\newblock {\em J. Fixed Point Theory Appl.}, 20(3), 2018.
\newblock Article: 106.

\bibitem{G}
M.A. Geraghty.
\newblock On contractive mappings.
\newblock {\em Proc. Amer. Math. Soc.}, 40(2):604--608, 1973.

\bibitem{HKZ}
S.~Hu, M.~Khavanin, and W.~Zhuang.
\newblock Integral equations arising in the kinetic theory of gases.
\newblock {\em Appl. Anal.}, 34:261--266, 1989.

\bibitem{K11}
K.~Kuratowski.
\newblock Sur les espaces completes.
\newblock {\em Fund. Math.}, 15:301--309, 1930.

\bibitem{KGDC}
S. Karmakar, H. Garai, L.K. Dey, A. Chanda, Solution to Second Order Differential Equations via $F_{w}$-contractions, \textit{to appear in Fixed Point Theory}, 2020.

\bibitem{MT}
N.~Mizoguchi and W.~Takahashi.
\newblock Fixed point theorems for multivalued mappings on complete metric
  spaces.
\newblock {\em J. Math. Anal. Appl.}, 141(1):177--188, 1989.

\bibitem{NA2}
H.K. Nashine and R.~Arab.
\newblock Existence of solutions to nonlinear functional-integral equations via
  the measure of noncompactness.
\newblock {\em J. Fixed Point Theory Appl.}, 20(20:66), 2018.

\bibitem{NAAH}
H.K. Nashine, R. Arab, R.P. Agarwal, and A.S. Haghighi.
\newblock Darbo type fixed and coupled fixed point results and its
application to integral equation.
\newblock {\em Period. Math. Hungar.}, 77(1):94-107, 2017.

\bibitem{OM}
D.~O’Regan and M.~Meehan.
\newblock {\em Existence Theory for Nonlinear Integral and Integrodifferential
  Equations}.
\newblock Kluwer Academic, Dordrecht, 1998.

\bibitem{SB}
K.~Sadarangani, J.~Bana\'s.
\newblock Solvability of {V}olterra–{S}tieltjes operator-integral equations
  and their applications.
\newblock {\em Comput. Math. Appl.}, 41:1535–1544, 2001.

\bibitem{V1}
M.~V\"{a}th.
\newblock {\em Volterra and integral equations of vector functions, in: {P}ure
  and {A}pplied {M}athematics,}.
\newblock Marcel Dekker, New York, 2000.

\bibitem{ZKKMRS}
P.P. Zabrejko, A.I. Koshelev, M.A. Krssnosel'skii, S.G. Mikhlin, L.S.
  Rakovschik, and V.J. Stetsenko.
\newblock {\em Integral Equations}.
\newblock Noordhoff, Leyden, 1975.

\end{thebibliography}
\bibliographystyle{plain}

\end{document}